\newcommand{\ep}{\epsilon}
\newcommand\ZZ{\mathbb{Z}}
\newcommand\TT{\mathbb{T}}
\newcommand\beq{ \begin{equation} }
\newcommand\eeq{ \end{equation} }
\newcommand\mn{\medskip\noindent}
\begin{document}



\section{Introduction}

The contact process can be defined on any graph as follows: occupied sites become vacant at rate 1, while
vacant sites become occupied at rate $\lambda$ times the number of occupied neighbors. 
Harris \cite{Harris} introduced the contact process on $\ZZ^d$ in 1974. It has been extensively studied, see Liggett \cite{Lig99} for a summary of
most of what is known. 

Pemantle \cite{Pem92} began the study of contact processes on trees.
Let $\xi_t$ be the set of occupied sites at time $t$ and use $\xi^0_t$ to denote the process with $\xi^0_0 = \{0\}$
where 0 is the root of the tree. His main new result was that the process had two phase transitions:
\begin{align*}
\lambda_1 &= \inf\{ \lambda : P( \xi^0_t \neq \emptyset \hbox{ for all $t$}) > 0 \} \\
\lambda_2&  = \inf\{ \lambda :  \liminf_{t\to\infty} P( 0 \in \xi^0_t) > 0 \}.
\end{align*}
Let $\TT_d$ be the tree in which each vertex has $d+1$ neighbors. When $d=1$, $\TT_1=\ZZ$, so we restrict our attention to $d \ge 2$. Pemantle showed that $\lambda_1< \lambda_2$ when $d\ge 3$ by getting upper bounds on $\lambda_1$ and lower bounds on $\lambda_2$. Liggett \cite{Lig96} proved that in $d=2$ $\lambda_1 < 0.605 < 0.609 < \lambda_2$ to settle the last case.  In \cite{Stacey96} Stacey gave an elegant proof that on $\TT_d$ and a number of other graphs we have $\lambda_1< \lambda_2$.  

Pemantle also considered periodic trees and Galton-Watson trees. In the special case that the number of children alternates between $a$ and $b$ he showed
$$
1/(\sqrt{a} + \sqrt{b}) \le \lambda_2.
$$
He did not give the details of the proof, but this can easily be proved using Lemma 3.1 in Pemantle and Stacey \cite{PemSta01}, which gives a formula for the critical value for local survival for branching random walk on a general graph. Pemantle also showed, see the first sentence after (7) on page 2103,
that for a general period $k$ tree with degree sequence $(a_1,a_2,\dots,a_k)$ the critical value 
$\lambda_2 \le C/(a_1 \cdots a_k)^{1/2k}. $
When $k=2$ the bound is $C/(ab)^{1/4}$.
When $a=b$, the upper and lower bounds differ by a factor of 2. However when $a=1$ and $b=n$ 
\beq
1/(\sqrt{a} + \sqrt{b}) = 1/(1+\sqrt{n}) \qquad C/(ab)^{1/4} = C/n^{1/4}.
\label{twpbds} 
\eeq

As Pemantle notes on page 2103, the upper and lower bounds are different orders of magnitude. He continues with 
``Which of these asymptotics for $\lambda_2$ is sharp if either? The somewhat surprising answer is that the lower bound is
sharp even though the geometric mean $(a_1 \cdots a_k)^{1/k}$  is clearly a better representative for the growth rate of the tree.'' 
The next result shows that the lower bound is more accurate than the upper bound, but it is not quite sharp.  

\begin{theorem} \label{ub1n}
On the $(1,n)$ tree, as $n \to \infty$ the critical value 
$$
\lambda_2 \sim  \sqrt{c_2(\log n)/n}\quad\hbox{where $c_2=1/2$}.
$$
\end{theorem}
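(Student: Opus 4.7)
The proof establishes matching upper and lower asymptotic bounds on $\lambda_2$. The guiding heuristic is this: call a vertex of the $(1,n)$ tree \emph{big} if it has $n$ children. Big vertices are connected to $n$ big grandchildren $g$ via the unique length-$2$ path through a degree-$1$ intermediary $c$. A direct calculation on the 3-vertex CP on $\{v,c,g\}$ shows that the probability a CP starting at $v$ ever infects $g$ is $\sim \lambda^2$ as $\lambda \to 0$, so the expected number of big grandchildren reached in one lifetime of $v$ is $\sim n\lambda^2 = c\log n$ when $\lambda = \sqrt{c(\log n)/n}$. Local survival demands that the induced branching process on big vertices be robust enough to return infection to the origin, which produces the threshold at $c=1/2$.

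For the upper bound $\lambda_2 \le (1+\ep)\sqrt{(\log n)/(2n)}$, I would use a Bezuidenhout--Grimmett style block construction. Fix $c>1/2$. The block event says: starting with $v$ occupied at time 0, at some large constant time $T$, with probability tending to one as $n\to\infty$, the CP occupies a positive fraction of big vertices in a ball of radius $2\ell$ (with $\ell=\Theta(\log n)$), and by tree symmetry re-infects $v$. The key ingredient is a multi-generation spread estimate: over $\ell$ generations the induced BP on big vertices has offspring mean $n\lambda^2 = c\log n$, and for $c>1/2$ the probability that a given big vertex produces no big grandchildren is $\sim e^{-c\log n}=n^{-c}=o(n^{-1/2})$, small enough relative to the per-generation branching $n$ to prevent the BP from stalling. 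A second-moment estimate on the number of occupied big vertices at depth $\ell$ then gives the block event, and standard iteration produces local survival.

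For the lower bound $\lambda_2 \ge (1-\ep)\sqrt{(\log n)/(2n)}$, fix $c<1/2$ and bound the expected number of returns $E[N]$ to the origin. Write
\[
E[N] \le \sum_{k \ge 0} N_{2k}\,\lambda^{2k}\,\phi_k,
\]
where $N_{2k}$ is the number of closed walks of length $2k$ based at the origin and $\phi_k$ is a CP-specific correction factor. Pemantle's spectral bound gives $N_{2k}\le(1+\sqrt n)^{2k}$; with $\phi_k\equiv 1$ this recovers only the BRW lower bound $\lambda_2\ge 1/(1+\sqrt n)$. The improvement by a $\sqrt{\log n}$ factor comes from exploiting the fragility of the degree-$1$ intermediaries: a closed walk of length $2k$ traverses $\Theta(k)$ such bottlenecks, and the contact-process probability that none of them produces a fatal collision contributes a factor of order $\exp(-\alpha k\log n)$ for a suitable $\alpha>0$, leading to convergence of the sum precisely when $c<1/2$.

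The principal difficulty is obtaining the sharp constant $1/2$ in both directions. The lower bound requires going beyond the purely spectral Pemantle--Stacey BRW analysis to exploit the CP's self-interactions at degree-$1$ vertices; the upper bound requires a delicate concentration argument for the BP on big vertices sustained over $\Theta(\log n)$ generations. Matching these two estimates through the common constant $1/2$ is the heart of the proof.
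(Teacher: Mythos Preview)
Your heuristic misses the central mechanism, and as written the proposal would not produce the constant $1/2$ in either direction.

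The decisive quantity is the survival time of the contact process on the \emph{star} $S_1$ (a hub together with its $n$ leaves), which is $\exp((1+o(1))\lambda^2 n)=n^{c+o(1)}$ when $\lambda=\sqrt{c(\log n)/n}$. The paper proves matching upper and lower bounds on this survival time (Theorems~\ref{ubsurv} and~\ref{lbsurv}); everything else is bookkeeping. Because the star persists for time $\sim n^c$, a hub pushes a particle to a \emph{specified} grandchild hub with probability $\sim \lambda^2 n^c \sim n^{c-1}$, not $\lambda^2$. Out-and-back along a path of $m$ hubs then has expected multiplicity $n^m(n^{c-1})^{2m}=n^{(2c-1)m}$, which is where $c=1/2$ comes from. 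The lower bound on $\lambda_2$ uses the \emph{upper} bound on the star survival time to dominate the CP by a branching random walk on the hub tree; the upper bound on $\lambda_2$ uses the \emph{lower} bound on the star survival time together with an ignition lemma and a second-moment argument on out-and-back paths.

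Your ``one lifetime'' picture on the $3$-vertex path $\{v,c,g\}$ discards the $n-1$ other leaves of $v$, which are exactly what keep $v$ alive for time $n^c$. In your induced BP with offspring mean $n\lambda^2=c\log n$, the expected number of out-and-back paths of length $2m$ is $n^m(\lambda^2)^{2m}=(c\log n)^{2m}/n^m\to 0$ for every $c>0$, so that model predicts no local survival at any $\lambda$ of this order---the opposite conclusion from what you want for the upper bound. The argument that $P(\text{no offspring})\sim n^{-c}=o(n^{-1/2})$ is ``small relative to branching $n$'' does not correspond to any local-survival criterion. Likewise, the lower-bound improvement over Pemantle--Stacey does not come from collisions at degree-$1$ bottlenecks; it comes from the fact that a hub can only export particles for time $\le n^{c(1+\ep)}$ before its star dies. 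You need the star-survival-time estimates; without them there is no route to the constant $1/2$.
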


\mn
On page 2103, Pemantle says that ``for reasonably regular non-homogeneous trees the critical value is determined by $M$ the maximum number of children and
is at most $rM^{-1/2}$ where $r$ is a logarithmic measure of how far apart vertices with $M$ children are from each other.''
The next result confirms his intuition about the importance of the maximum degree but also shows that the lower degree vertices can have a significant
influence on $\lambda_2$.

\begin{theorem} \label{akub2}
Consider the $(n,a_1,a_2, \ldots, a_k)$ tree with $\max_i a_i \le  Cn^{1-\delta}$ for some positive $C, \delta$ and suppose
$$
b=\lim_{n\to\infty} \frac{\log (a_1 a_2 \cdots a_k)}{\log n}.
$$
As $n \to \infty$ the critical value $\lambda_2 \sim  \sqrt{c_k\log n/n}$ where 
$c_k = (k-b)/2$.
\end{theorem}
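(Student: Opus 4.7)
The theorem would be proved by establishing matching upper and lower bounds on $\lambda_2$ of the form $\sqrt{(c_k\pm o(1))\log n/n}$. The strategy should adapt the methods used for Theorem~\ref{ub1n}, which is essentially the special case $k=1,\ a_1=1,\ b=0,\ c_k=1/2$, to longer periodic sequences.

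For the upper bound (local survival when $\lambda > \sqrt{(c_k+\epsilon)\log n/n}$), I would use a block construction comparing the contact process to supercritical oriented percolation. The natural macro-sites are the type-$n$ vertices, i.e.\ those at depths divisible by $k+1$. Starting from an infection at such a vertex $v$, one shows that with probability tending to $1$ in $n$, within a controlled time window the infection hits enough type-$n$ vertices at depth $k+1$ from $v$ to sustain the renormalized dynamics. There are $n\cdot a_1\cdots a_k = n^{1+b+o(1)}$ such depth-$(k+1)$ descendants, each hit with probability of order $\lambda^{k+1}$ per root-to-descendant passage, so with $\lambda\sim\sqrt{c_k\log n/n}$ the expected number of successfully infected type-$n$ grandchildren diverges at the right rate to drive a concentration argument and a $1$-dependent percolation comparison; a standard restart argument then boosts global survival of the block process to local survival of the contact process.

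For the lower bound the naive BRW domination is not sharp; it gives only $\lambda_2\gtrsim n^{-1/2}$ via the Pemantle--Stacey estimate, missing the essential $\sqrt{\log n}$ factor, because local survival of the contact process is strictly harder than survival of its BRW envelope. A sharper argument should bound the expected number of returns of infection to the root via a walk-counting or generating-function computation: closed walks from the root of length $2j(k+1)$ have count $\asymp n^{(1+b)j}$, weighted by $\lambda^{2j(k+1)}$ together with timing factors accounting for the exponential lifetimes of intermediate vertices; summing these contributions and optimizing yields extinction below the claimed threshold. The main obstacle is pinning down the exact constant $c_k = (k-b)/2$: the factor $k-b$ reflects that the $k$ small-degree generations contribute only $n^b$ to the branching, leaving $n^{k-b}$ of geometric slack that must be absorbed into the $\log n$ correction, and the factor $1/2$ comes from the round-trip nature of local survival. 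Both the survival block construction and the extinction walk count must be carried out sharply enough (and the contributing round-trips identified as those of length exactly $2(k+1)$ rather than longer excursions) to recover this value precisely.
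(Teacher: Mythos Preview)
Your high-level architecture --- hubs as macro-sites, a block/percolation comparison for the upper bound, and a walk-count over hub-to-hub steps for the lower bound --- matches the paper. But the proposal has a genuine gap: it never identifies the mechanism that produces the $\log n$ and the constant $(k-b)/2$, namely the survival time of the contact process on the star (or on $S_k$) centered at a hub.

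On the upper-bound side you say a given depth-$(k{+}1)$ hub is hit ``with probability of order $\lambda^{k+1}$ per root-to-descendant passage,'' and that this makes the expected number of infected grandchildren diverge. It does not: with a single passage the expected count is $n^{1+b}\lambda^{k+1}\asymp n^{1+b-(k+1)/2}$ up to logs, which for the $(1,n)$ tree ($k=1$, $b=0$) is $O(1)$. What the paper actually uses is Theorem~\ref{lbsurv}/Lemma~\ref{timebd}: once a hub is ignited it survives for time $\ge n^{c(1-o(1))}$, giving $\sim n^c$ independent attempts to push, so the per-target success probability is $\gtrsim n^{c-(k+1)/2}$. Local survival then requires the out-and-back product $N\cdot (n^{c-(k+1)/2})^2>1$, i.e.\ $c>(k-b)/2$; the paper closes this with a second-moment argument on return paths rather than a ``restart'' argument.

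On the lower-bound side your weight $\lambda^{2j(k+1)}$ on closed hub-walks, even with unspecified ``timing factors for exponential lifetimes,'' cannot produce the right threshold: summing $N^j\lambda^{2j(k+1)}$ gives convergence for \emph{all} $c$ whenever $b<k$, which would falsely imply $\lambda_2=0$. The paper instead proves Theorem~\ref{ubsurv} --- a nontrivial upper bound $E\tau_k\le C(\log n)e^{(1+\epsilon)\lambda^2 n}$ on the survival time of the contact process on $S_k$ --- and uses it to show that the expected number of frozen particles a hub sends to a fixed neighboring hub is $\le \lambda^{k+1}n^{(1+o(1))c}$. The correct weight per hub-step is therefore $\lambda^{k+1}n^c$, and summing $(2N^{1/2}\lambda^{k+1}n^c)^{2m}$ over $m$ converges exactly when $c<(k-b)/2$. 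The $e^{\lambda^2 n}$ survival time on the star is the missing idea on both sides.
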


\noindent
Theorem \ref{ub1n} is the special case $k=1$, $a_1=1$, so it suffices to prove Theorem \ref{akub2}.

For readers (and referees) who think that proving results on periodic trees is trivial we state two open questions

\mn
{\bf Problem 1.} {\it Find the asymptotic behavior of $\lambda_1$ on the $(n,a_1,a_2, \ldots, a_k)$ tree as $n\to\infty$.}

\mn
Hint:The answer is $\sqrt{c'_k/(\log n)/n}$ when $c_k' = (k+1)/2 - (b+1)>0$ but when $c'_k<0$ $\lambda_1$ is much smaller. The asymptotics tell us that $\lambda_1 < \lambda_2$ for large $n$. Stacey's results in \cite{Stacey96} imply that $\lambda_1 < \lambda_2$ on period two trees, but it is open to prove (see page 1725 in \cite{Stacey96}).

\mn
{\bf Problem 2.} {\it Prove that $\lambda_1 < \lambda_2$ on the (2,3,4) tree.}

\subsection{The survival time of the contact process on finite trees}

Let $\rho$ be the root of the periodic tree $(n,a_1,\dots,a_k)$. Truncating the periodic tree at height $k$ gives a subgraph $S_k=\{ x: d(\rho,x) \leq k\}$, where $d$ is the distance on the tree. A vertex $x\in S_k$ at distance $i$ from the center $\rho$ is said to be in the set $L_i$ (``level $i$"). In $S_k$, vertices on level $1 \le i < k$ have $a_i$ children  while vertices on level $k$ are leaves, i.e., they have no children. When the context is clear we also write $S_k=(n,a_1,\dots,a_k)$, where the sequence corresponds to the offspring number on each level. When we delete the root from $S_k$ we end up with $n$ subtrees $\{T_{k,i}\}_{i=1}^n$ with $T_{k,i}=(a_1,\dots, a_k)$.

The first step in the proof of Theorem \ref{akub2} is to prove in Section 2  an upper bound on the survival time of contact process on $S_k$.

\begin{theorem}\label{ubsurv}
Suppose $\lambda=\sqrt{ c(\log n)/n}$ where $c>0$. Let $\tau_k$ be the survival time of contact process on $S_k=(n,a_1,\dots,a_k)$ starting from all sites occupied where $\max_i a_i \le C n^{1-\delta}$ for some positive $C$ and $\delta$. For any $\ep>0$, when $n$ is sufficiently large
$$E \tau_k \leq C_0(\log n)e^{(1+\ep)\lambda^2 n},
$$
where $C_0$ is some positive constant depending on $k$ but not on $C, \delta$.
\end{theorem}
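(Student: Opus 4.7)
The plan is to prove the bound by induction on $k$, by comparing the contact process on $S_k$ to the contact process on the star $K_{1,n}$ consisting only of the root and its $n$ direct children. The exponent $e^{\lambda^2 n}$ is the well-known survival-time order of the star at $\lambda$: when the root is occupied, each child is independently occupied a fraction $\lambda/(1+\lambda)\approx\lambda$ of the time, so at the moment the root recovers the number of occupied children is approximately Poisson with mean $n\lambda$; the probability that this Poisson count is too small for the root to be re-infected before all children die is $\asymp e^{-\lambda^2 n}$, and a renewal argument over the cycles of root death and revival converts this per-cycle extinction probability into expected survival time $e^{(1+o(1))\lambda^2 n}$. The factor $\log n$ in the target bound is what leaves room to absorb the contribution of the subtrees hanging below the level-1 vertices.

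The base case $k=1$ is exactly this star bound, which I would prove by the Poisson/Chernoff computation just sketched, combined with a standard renewal comparison. For the inductive step the key observation is that at our rate $\lambda=\sqrt{c(\log n)/n}$, each subtree $T_v=(a_1,\ldots,a_k)$ rooted at a level-1 vertex is severely subcritical: the hypothesis $\max_i a_i\le Cn^{1-\delta}$ gives $\lambda^2 a_i\le cC(\log n)n^{-\delta}\to 0$. Consequently the contact process on $T_v$ alone starting from full occupancy has expected survival time $O(\log n)$ (the inductive hypothesis delivers this provided we formulate it uniformly in the relationship between $\lambda$ and the root branching factor; it can also be checked directly by a subcritical branching-process bound on a tree of total size at most $n^b$). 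Thus each subtree contributes only a polylog boost to its level-1 vertex's occupancy, so the dynamics on the embedded star $K_{1,n}$ is effectively perturbed by a leaf-recovery rate of order $1/\log n$ in place of $1$, which shifts the exponent $\lambda^2 n$ by only an $o(1)$ factor and at worst inflates the prefactor by a factor $\log n$.

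The main obstacle is making this perturbation argument rigorous, since the subtree state at $v$ is correlated with the history of $v$'s own occupancy, and hence with the root. My plan is to decompose the trajectory into excursions between successive fresh infections of the root and, on each excursion, dominate the subtree by a contact process on $T_v$ that is re-initialized to full occupancy whenever $v$ becomes infected (monotonicity guarantees this domination because adding occupied sites only speeds the process up). The polylog subcritical bound then controls the integrated subtree activity on each excursion independently of the others, and a concentration estimate across the $\approx e^{\lambda^2 n}$ excursions lets me replace random excursion statistics by their means. Combined with the base-case star bound this yields $E\tau_k\le C_0(\log n)e^{(1+\ep)\lambda^2 n}$, with $C_0=C_0(k)$ coming from the compounding of polylog prefactors across the $k$ inductive layers; the constants $C$ and $\delta$ enter only through the vanishing quantity $cC n^{-\delta}\log n$, so $C_0$ can be chosen independently of them.
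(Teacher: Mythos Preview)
Your inductive scheme has a genuine gap at the perturbation step. The domination ``reinitialize $T_v$ to full occupancy whenever $v$ becomes infected'' is far too crude when $a_1$ can be as large as $Cn^{1-\delta}$. Under that rule, at the moment $v$ recovers it faces $\Theta(a_1)$ freshly occupied children, and these reinfect $v$ with probability $\approx 1-(1-\lambda)^{\Theta(a_1)}$; since $\lambda a_1$ can be of order $n^{1/2-\delta}\sqrt{\log n}\to\infty$, the vertex $v$ is reinfected with probability tending to $1$, the subtree resets again, and the loop never closes. So the effective lifetime of a level-$1$ vertex under your domination is not $O(\log n)$ but divergent. Even if one grants the heuristic that the leaf recovery rate becomes $\mu=1/\log n$, your claim that this ``shifts the exponent $\lambda^2 n$ by only an $o(1)$ factor'' is quantitatively wrong: for a star with leaf recovery rate $\mu\gg\lambda$ the equilibrium leaf occupancy is $p\approx\lambda/\mu$ and the per-cycle extinction probability is $\approx(1-p)^{np}\approx e^{-n\lambda^2/\mu^2}$, so with $\mu=1/\log n$ the exponent becomes $c(\log n)^3$ rather than $(1+o(1))c\log n$, a superpolynomial loss relative to the target $n^{(1+\ep)c}$.

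The paper avoids both problems by never invoking a full-occupancy reset for the subtrees. It holds $\rho$ occupied for time $M=4k\log n$ so that the subtree processes are dominated by their stationary law $\xi_\infty$, and uses a dual-path count to show $P(x_i\in\xi_\infty)\le(1+\eta)\lambda^i$. In particular the level-$1$ occupancy stays $\approx\lambda$, exactly as in the plain star; the deeper levels are handled not by modifying a recovery rate but by bounding directly the expected number of particles from levels $\ge 2$ that ever reach $\rho$ after it dies, namely $\sum_{i\ge 2} na_1\cdots a_{i-1}\lambda^{2i}\le n^{-\delta/2}$. Harris--FKG applied to the two decreasing events (central-star wipeout and no recolonization from outside) then gives a per-round extinction probability $\ge(1-\eta)^2e^{-(1+2\eta)\lambda^2 n}$, with each round costing $O(\log n)$ time. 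No induction on $k$ is needed; the essential input is the equilibrium bound $(1+\eta)\lambda^i$, which your proposal does not recover.
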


When $k=1$, $S_k$ reduces to the star graph. In this case the result holds for all $\lambda$ (see \cite{melody} for details). Lemma \ref{ubsurv} gives the only upper bound we know of for the survival time for the contact process on the star. 
There are many lower bounds for the survival time on stars. See Theorem 4.1 in \cite{Pem92}, Lemma 5.3 in \cite{BBCS}, and Lemma 1.1 in \cite{ChaDur}. These bounds can be used to show that the critical value for prolonged survival of the contact process on some random graphs is 0, but to identify the asymptotics for the critical value on the $(n,a_1,\dots,a_k)$ tree, we need a more precise result on the survival time on the star graph

\mn
\begin{theorem} \label{lbsurv}
 Let $L = (1-4\delta)\lambda n$ with $\delta>0$ . If $\eta>0$ is small then
$$
P_{L,1} \left( T_{0,0} \ge \frac{1}{\lambda^2 n} e^{(1-\eta)\lambda^2 n} \right) \to 1 \qquad \hbox{as $n\to\infty$.}
$$
\end{theorem}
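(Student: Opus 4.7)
The plan is to analyze the center's on/off dynamics and show that extinction requires an exponentially unlikely event: during a single off-excursion of the center, every occupied leaf must die before any of them manages to re-ignite the center. Write $(W_t,X_t) \in \{0,1\} \times \{0,1,\ldots,n\}$ for the state of the center and the number of occupied leaves, so $P_{L,1}$ is the law with $(W_0,X_0)=(1,L)$ where $L=(1-4\delta)\lambda n$.

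First I would show that $X_t$ quickly enters and stays in the healthy region $H=\{X \geq (1-3\delta)\lambda n\}$. When $W_t=1$, $X_t$ is a birth-death chain with birth rate $\lambda(n-X)$ and death rate $X$; its equilibrium is $\mathrm{Binomial}(n,\lambda/(1+\lambda))$ with mean $\sim \lambda n$ and variance $\sim \lambda n$. Starting from $L=(1-4\delta)\lambda n$, the drift $\lambda n - (1+\lambda)X$ is positive and of order $\delta\lambda n$, so by a Gaussian concentration argument $X_t$ enters $H$ in $O(1)$ time. The equilibrium probability of $\{X \leq (1-3\delta)\lambda n\}$ is at most $\exp(-c\delta^2 \lambda n)$ by the binomial Chernoff bound, so excursions of $X_t$ below $H$ are themselves exponentially rare.

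The heart of the argument is a per-excursion extinction estimate. Each time the center goes off with $m$ leaves occupied, condition on the i.i.d.\ rate-1 exponential leaf-lifetimes $\tau_1,\ldots,\tau_m$: re-ignitions arrive at rate $\lambda X(s)$ where $X(s)$ is the number of leaves still alive at time $s$, so the total re-ignition intensity over the full potential off-excursion is $\lambda \int_0^{\max_i \tau_i} X(s)\,ds = \lambda \sum_{i=1}^m \tau_i$. Thus the probability of no re-ignition equals $E[\exp(-\lambda \sum_i \tau_i)] = (1+\lambda)^{-m}$. For $m \geq (1-3\delta)\lambda n$ and $\lambda \to 0$, this is at most $\exp(-(1-3\delta)(1+o(1))\lambda^2 n)$, which is $\leq \exp(-(1-\eta/2)\lambda^2 n)$ once $\delta$ is chosen small enough relative to $\eta$. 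The center produces at most $\sim T$ off-excursions in time $T = \frac{1}{\lambda^2 n}e^{(1-\eta)\lambda^2 n}$, so a union bound shows the probability that any excursion launched from inside $H$ ends in extinction is $o(1)$.

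The main obstacle will be handling the rare off-excursions launched from below $H$, where the per-excursion estimate is too weak. To control this I plan either to use a Lyapunov function such as $V(X)=e^{-aX}$ to show that $X_t$ spends only a vanishing fraction of its time outside $H$, or to couple the process with a birth-death chain reflected at the lower boundary of $H$ so that off-excursions always start from states with at least $(1-3\delta)\lambda n$ occupied leaves. The bookkeeping linking the on/off dynamics of the center with the fluctuations of $X_t$ is the technical crux, but once it is in place the exponential smallness of the extinction event per off-excursion delivers the claimed survival time.
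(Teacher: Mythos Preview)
Your per-excursion calculation is correct: conditionally on the center going off with $m$ occupied leaves, the probability that all of them die before any re-ignites the center is exactly $(1+\lambda)^{-m}$.  This is the same geometric law the paper uses for the number $Z$ of leaves lost during an off-excursion.

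The gap is in Step~1.  The claim that $X_t$ ``stays in the healthy region'' $H=\{X\ge (1-3\delta)\lambda n\}$ on $[0,T]$ is not just the ``main obstacle'' --- it is false on the time scale $T=\tfrac{1}{\lambda^2 n}e^{(1-\eta)\lambda^2 n}$.  Since $P(Z>a\lambda n)\approx e^{-a\lambda^2 n}$ and there are $\sim T$ off-excursions, the expected number of off-excursions with $Z>a\lambda n$ is $\sim \tfrac{1}{\lambda^2 n}e^{(1-\eta-a)\lambda^2 n}$, which diverges for every $a<1-\eta$.  So with probability tending to~$1$ there are many single off-excursions that drop $X$ by almost $(1-\eta)\lambda n$; after such a drop the center may well die again before $X$ climbs back into $H$, producing off-excursions launched from arbitrarily small $m$.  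A ``vanishing fraction of time outside $H$'' does not help, because a single bad off-excursion suffices for extinction; and coupling with a chain reflected at the lower boundary of $H$ gives a bound in the wrong direction.  In short, the decomposition \{stay in $H$\} $+$ \{per-excursion bound\} cannot close: lowering the threshold of $H$ makes the per-excursion estimate too weak for the union bound, while keeping it high makes the first event fail.

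The paper avoids this trap by not separating the two effects at all.  It collapses each off-excursion of the center into an instantaneous jump $X\to X-Z$ (your geometric $Z$), obtaining a one-dimensional minorant $X_t$ with three rates: $-1$ at rate $L$, $+1$ at rate $(1-\delta)\lambda n$, and $-Z$ at rate $1$.  The single exponential function $h(x)=(1-\theta)^x$ with $\theta\approx\lambda$ is shown to be a supermartingale for this \emph{combined} process (the $-Z$ jump contributes exactly $E(1-\theta)^{-Z}$, a geometric series), and optional stopping then gives $P_{L-1}(T^-_{\eta L}<T_L)\le 2\lambda e^{-(1-3\eta)\lambda^2 n}$.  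This yields exponentially many returns to $L$, each lasting at least order $1/(\lambda n)$, hence the survival time.  Note that the paper never claims $X$ stays near its mean; it only shows $X$ stays above the much lower level $\eta L$, and the supermartingale handles the large $-Z$ jumps automatically.  Your suggested Lyapunov function $e^{-aX}$ is precisely this supermartingale, but once you use it correctly you no longer need the per-excursion extinction bound or the set $H$ at all.
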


\noindent
Combining this with Theorem \ref{ubsurv} shows that if $\lambda^2 n \to\infty$ the survival time on the star is $\exp( (1+o(1) )\lambda^2 n )$.

\section{Upper bound on survival times on $S_k$}

\subsection{Equilibrium on $T_k^*$}
To prepare for the proof of Theorem\ref{ubsurv}, we first consider the contact process on $S_k$ with the center $\rho$ permanently occupied. Let $T_k=(a_1,\dots,a_k)$ denote a generic subtree of $\rho$ and let $T^*_k$ be $T_k$ with $\rho$ attached to the root and with $\rho$ permanently occupied. Since the center $\rho$ is always occupied, the contact process on $S_k$ can be simply treated as $n$ independent contact processes on the $T^*_{k,i}$'s. As $\rho$ is always occupied, the contact process $\xi_t$ on $T^*_k$ has a stationary distribution $\xi_{\infty}$. 

We use $x_i$ to denote a generic vertex in $L_i$. To compute the occupancy probability for a vertex $x_i\in L_i$ we need the notion of the dual process $\zeta_t$ of $\xi_t$. To construct $\xi_t$ by graphical representation, we assign a Poisson process $N_x$ of rate 1 to each vertex $x\in T^*_k\backslash \{\rho\}$ and a Poisson process $N_{(x,y)}$ of rate $\lambda$ to each ordered pair of vertices that are joined by an edge of $T^*_k$. The dual process $\zeta_t$ is constructed by looking at the dual path on the same graphical representation we used to construct $\xi_t$. (See Liggett \cite{Lig99} for an account of graphical representation and duality.) It follows from duality that
$$
P(x_i\in \xi_t)=P(\rho\in \zeta^{x_i}_s \text{ for some }s\leq t)\leq P(\rho \in \zeta^{x_i}_s \text{ for some }s\geq 0).
$$
Letting $t\to\infty$ gives $P(x_i\in \xi_\infty)\leq P(\rho \in \zeta^{x_i}_s \text{ for some }s\geq 0),$
That  is, if $x_i\in \xi_\infty$ then the dual contact process $\zeta^{x_i}_t$ starting from $x_i$ has to reach $\rho$ at some time. If we have a dual path of length $i+2m$ from $x_i$ to $\rho$ then $i+m$ steps will be toward $\rho$ and $m$ steps away. Let $(y_0,y_1,\dots,y_{i+2m})$ denote a path from $x_i$ to $\rho$ with $y_0=x_i$ and $y_{i+2m}=\rho$. To produce a particle at $\rho$, we need a birth from $y_j$ to $y_{j+1}$ to occur before the particle at $y_j$ dies for all $j=0,\dots i+2m-1$. So the expected number of particles produced at $\rho$ by this path is 
$$
\left(\frac{\lambda}{1+\lambda}\right)^{i+2m}\leq \lambda^{i+2m}.
$$ 
If we let $d=Cn^{1-\delta}$ so that $a_i\leq d$ for all $i=1,\dots, k$, then the expected number of particles $N_{x_i,\rho}$ that reach $\rho$ has 
\begin{align}\label{nxrho}
\nonumber E N_{x_i,\rho}&\leq \sum_{m=0}^\infty {{i+2m}\choose {m}} \lambda^{i+2m} d^m \leq \lambda^i(1+ \sum_{m=1}^\infty 2^{i+2m} \lambda^{2m} d^m) \\
& = \lambda^i (1+2^i\sum_{m=1}^\infty (4 \lambda^2 d)^m) \leq (1+\eta)\lambda^i.
\end{align} 
Since $\lambda^2 d\to 0$ as $n\to \infty$, $\eta>0$ can be arbitrarily small if $n$ is large enough. It follows that 
\beq\label{stationary}
P(x_i\in \xi_\infty) \leq P(N_{x_i,\rho}\geq 1) \leq E (N_{x_i,\rho})\leq (1+\eta)\lambda^i.
\eeq

\subsection{Proof of Theorem \ref{ubsurv}}

\begin{proof}
Call the subgraph consisting of $\rho$ and all of its neighbors the \textit{central star}. We observe that the simultaneous occurrence of the following two events will lead to the extinction of the contact process on $S_k$: 
\begin{align*}
 G &=\{ \text{all the particles on the central star die before they give birth}\}, \\
 B^c&=\{ \text{no particle from outside the central star recolonizes the root} \}.
 \end{align*} 

We start by estimating $P(G)$. Starting with all sites occupied on $S_k$, we will set the center $\rho$ to be occupied for a certain amount of time $M$ while the distribution of the contact process on each $T^*_{k,i}$ becomes close to the equilibrium $\xi_\infty$. The contact process $\xi_t$ on $T^*_k$ is additive so we can write $\xi_t = \xi^\rho_t \cup \hat{\xi}^1_t$ where $\hat{\xi}^1_t$ is the contact process on $T_k$ with initially all 1's, and $\xi^\rho_t$ the contact process on $T^*_k$ with $\rho$ initially and permanently occupied. By the time $\hat{\xi}^1_t$ dies out, we have $\xi_t=\xi^\rho_t$, whose distribution is stochastically dominated by the stationary distribution $\xi_\infty$.

For our purpose $M$ should be chosen to be roughly the extinction time of $\hat{\xi}^1_t$. To simplify notation we let $d=Cn^{1-\delta}$ and consider contact process on the regular tree $\TT_d$. Let $A_t$ be the contact process on $\TT_d$ with birth rate $\gamma$ and death rate 1. Since $T_k\subset \TT_d$, if we take $\gamma>\lambda$ the contact process $\xi_t$ on $T_k$ is stochastically dominated by $A_t$. Following the proof of Theorem 4.1 in part I of Liggett \cite{Lig99}, we define 
$$w_\theta(A_t)=\sum_{y\in A_t} \theta^{\ell(y)}$$
where $\ell(y)$ is the distance from the root of $\TT_d$ to $y$. Liggett shows that if $\theta = 1/\sqrt{d}$
$$
\left. \frac{d}{dt} E_A w_\theta(A_t) \right|_{t=0} \le [2\sqrt{d}\gamma -1] w_\theta(A_0)
$$
Taking $\gamma = 1/4\sqrt{d} \approx n^{-(1-\delta)/2 }\gg \lambda$ for large $n$, we have
$$
E_{T_k} w_\theta(A_t) \le w_{\theta}(T_k) e^{-t/2} \leq (d\theta)^k e^{-t/2}.
$$
Markov's inequality implies that $
d^{-k/2} P_{T_k}( \ell(x) \le k \hbox{ for some $x\in A_t$}) \le  d^{k/2}e^{-t/2}.$
It follows that
\beq\label{survtime}
P( \hat{\xi}^1_t \neq \emptyset ) \le P_{T_k}( \ell(x) \le k \hbox{ for some $x\in A_t$}) \le d^{k}e^{-t/2},
\eeq
so the process dies out with high probability when $t \geq 4k \log d$. Since $d=Cn^{1-\delta}$  we can choose $M=4k\log n$.

Now we start the contact process on $T^*_k$ with all sites occupied. We set $\rho$ to be occupied for the first $M$ units of time and then allow $\rho$ to become vacant at rate 1. For any $\eta>0$ and $t\geq M$, when $n$ is large enough
\begin{align*}
P(\xi_t(x_i)=1)&\leq P(\xi^\rho_t(x_i)=1)+P(\hat{\xi}^1_t(x_i)=1)
\leq P(x_i \in \xi_\infty)+P( \hat{\xi}^1_t \neq \varnothing)\\
&\leq (1+\eta)\lambda^i+n^{-(1-\delta)k}\leq (1+2\eta)\lambda^i \quad \quad \text{by (\ref{stationary})}.
\end{align*}

The probability that an occupied site adjacent to $\rho$  dies out before giving birth is $\geq 1/(1+\lambda)$. Hence if there are $m$ occupied neighbors of  $\rho$ when it becomes vacant  then $P(G) \ge (1+\lambda)^{-m}$. Since the occupancy of sites adjacent to the root are independent events, (\ref{stationary}) and the law of large numbers implies that $m \le (1+2\eta)\lambda n$ with high probability. From this it follows that for large $n$ 
\beq
P(G) \geq (1-\eta) (1+\lambda)^{- (1+2\eta)\lambda n}\geq (1-\eta)e^{-(1+2\eta)\lambda^2 n}. \label{PofG}
\eeq

Turning our attention to event $B^c$, we begin by noting that the expected number of particles outside the central star at time $t\geq M$ is
$$
\leq (1+2\eta)\left(na_1\lambda^2 + na_1a_2 \lambda^3 + \ldots n a_1 a_2 \cdots a_{k-1} \lambda^{k}\right).
$$
Since  $\lambda = \sqrt{(c \log n)/n}$, if  $ a_i = Cn^{1-\delta}$ for all $i$ this grows rapidly as $n\to\infty$. 

Fortunately, if we start a contact process on $S_k$ from a site on level $i$ and freeze any particle that reaches the center $\rho$, then the expected number of such particles is $\le (1+\eta)\lambda^i$ by the same dual path argument as in (\ref{nxrho}). Therefore the expected number $N_\rho$ of particles reaching the center $\rho$ is
$$
\le (1+\eta)(1+2\eta) \left(na_1 \lambda^4 + na_1a_2 \lambda^6 + \ldots n a_1 a_2 \cdots a_{k-1} \lambda^{2k+2}\right)\leq n^{-\delta/2}.
$$
Hence when $n$ is large
$$
P(B) \le P(N_\rho\geq 1)\leq E(N_\rho) \leq n^{-\delta/2} \leq \eta.
$$
$G$ and $B^c$ are both decreasing events, i.e., having more births or fewer deaths is bad for them, so by the Harris-FKG inequality 
$$
P(G \cap B^c) \ge P(G)P(B^c)\geq (1-\eta)^2 e^{-(1+2\eta)\lambda^2 n}.
$$
Note that when $G\cap B^c$ occurs the process dies out on $S_k$. When  $\rho$ first becomes vacant  after time $M$,  if $G\cap B^c$ occurs then we terminate the process and obtain an upper bound for the survival time; if $G\cap B^c$ does not occur then we set all of the site in $S_k$ to be 1 and make $\rho$ occupied for he next $M$ units of time.  Since $P(G\cap B^c)\geq (1-\eta)^2 e^{-(1+2\eta)\lambda^2 n}$, in expectation we need to try $(1-\eta)^{-2}e^{(1+2\eta)\lambda^2 n}$ times to have a success. It takes an expected amount of time $M+1$ for $\rho$ to become vacant. Let $Exp(\beta)$ denote an exponential random variable with intensity $\beta$. The expected amount of time needed to determine if $G$ occurs is at most
$$
\sum_{i=1}^n E(Exp(i))=\sum_{i=1}^n (1/i)\leq 2\log n.
$$
Let $\{X_i\}_{i=1}^n$ be a set of i.i.d. random variables representing the survival time of the contact process $\hat{\xi}^1_t$ on $\{T_{k,i}\}_{i=1}^n$, respectively. It takes an extra time $\leq E(\max_{1\leq i\leq n} X_i)$ to determine if $G\cap B^c$ occurs. Since 
\begin{align*}
E(\max_{1\leq i\leq n} X_i)&\leq 2(k+1)\log n +\int_{2(k+1)\log n}^\infty P(\max_{1\leq i\leq n} X_i>t)\;dt\\
&\leq 2(k+1)\log n+ \int_{2(k+1)\log n}^\infty n \cdot (Cn^{1-\delta})^ke^{-t/2} \;dt\leq 4k\log n \quad\quad \text{ by (\ref{survtime})},
\end{align*}
each round takes at most $M+1+2\log n+4k\log n\leq 9k\log n$ units of time in expectation. It follows that 
$$E \tau_k \leq (9k\log n)(1-\eta)^{-2}e^{(1+2\eta)\lambda^2 n}\leq C_0(\log n)e^{(1+2\eta)\lambda^2 n}$$
for some $C_0>0$ and sufficiently large $n$.
\end{proof}

\section{Lower bound on $\lambda_2$ }

\begin{lemma} \label{lbcrab}
Let $c_k=(k-b)/2$ and $\epsilon>0$. When $n$ is sufficiently large, the critical value $\lambda_2$ of the contact process on the $(n,a_1,a_2,\dots,a_k)$ tree in Theorem \ref{akub2} satisfies
$$
\lambda_2\geq \sqrt{\frac{c_k\log n}{(1+\ep)n}}.
$$
\end{lemma}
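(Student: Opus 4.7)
The plan is to bound the expected total occupation time $E\int_0^\infty \mathbf{1}_{\rho\in\xi_t^\rho}\,dt$ and show it is finite when $\lambda^2 n < c_k\log n/(1+\ep)$, which by standard arguments forces $\liminf_t P(\rho\in\xi_t^\rho)=0$ and gives the lower bound on $\lambda_2$. I would work with a super-vertex decomposition of the infinite tree combined with a block-by-block estimate of forward colonization and backward infection cascades.

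First, decompose the infinite $(n,a_1,\ldots,a_k)$ tree into super-vertex blocks isomorphic to $S_k$, rooted at vertices whose level is divisible by $k+1$; each block has $N=na_1\cdots a_k=n^{1+b+o(1)}$ child blocks in the induced meta-tree. By Theorem \ref{ubsurv}, a block that has been colonized (i.e.\ whose root has been occupied) has an effective lifetime at most $T=C_0(\log n)e^{(1+\ep)\lambda^2 n}$, and by (\ref{stationary}) its intra-block level-$j$ vertices have time-averaged occupation at most $(1+\eta)\lambda^j$.

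Next, I would estimate the $\rho$-contribution of a single colony at meta-depth $\ell$ via a cascading backward count. Its root is occupied for total time $\sim T$, which sends $\sim \lambda T$ infections into the adjacent leaf of $V_{\ell-1}$. Applying the path-count (\ref{nxrho}) within $V_{\ell-1}$, where the interior degrees $\leq \max_i a_i \leq Cn^{1-\delta}$ satisfy $4\lambda^2\max_i a_i\to 0$, each such infection contributes expected reach $(1+\eta)\lambda^k$ to the root $r_{V_{\ell-1}}$. Once $r_{V_{\ell-1}}$ is re-hit, the block $V_{\ell-1}$ reactivates for another $\sim T$ units, giving $\sim T\lambda^k$ expected occupation of $r_{V_{\ell-1}}$ per arrival. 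Chaining these intra-block factors with the single-edge inter-block factors $\lambda$, the expected $\rho$-occupancy from one colony at meta-depth $\ell$ is of order $T\cdot(T\lambda^{k+1})^\ell$. Multiplying by the forward count $m_F^\ell$ with $m_F\leq (1+\eta)TN\lambda^{k+1}$ and summing over $\ell$ gives a geometric series with ratio of order $T^2 N\lambda^{2(k+1)}$; substituting $\lambda^2 n = c\log n$, the exponent of $n$ becomes $2(1+\ep)c+b-k$, which is strictly negative precisely when $c<(k-b)/(2(1+\ep))=c_k/(1+\ep)$.

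The main obstacle is justifying the cascading backward estimate rigorously, in particular the $T$ reactivation factor per meta-level. One must handle (i) applying (\ref{nxrho}) block-by-block despite the high-degree root-type vertices at block boundaries (the global convergence condition $4\lambda^2 d<1$ fails when $d\sim n$, so detours must be restricted to block interiors and boundary excursions tracked separately), and (ii) the correlations between successive back-infection cascades in the same intermediate block, so that two cascades overlapping in time do not each independently contribute a full $T$-reactivation. A naive forward-only count gives only the weaker threshold $c_k'=(k-1-2b)/2$ corresponding to $\lambda_1$; extracting the extra factor of $n^{(1+b)/2}$ that separates $\lambda_2$ from $\lambda_1$ is exactly what the cascading backward step accomplishes, and tightening this while avoiding double-counting is where the technical work lies.
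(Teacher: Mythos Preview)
Your overall plan---block decomposition at the degree-$n$ hubs, lifetime bound $T$ from Theorem~\ref{ubsurv}, intra-block path count \eqref{nxrho}, and a meta-tree ratio $T^2N\lambda^{2(k+1)}$---matches the paper's ingredients and lands on the correct threshold. The difference is in how the dominating process is set up.

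The paper sidesteps your cascading-backward bookkeeping by passing directly to a \emph{branching random walk} on the $N$-regular meta-tree of hubs. One runs the contact process on $S(\rho)=S_k\cup L_{k+1}$ from $\rho$ alone, freezing every particle that reaches $L_{k+1}$; when $S_k$ empties, each frozen particle---even several at the same site---independently starts a fresh copy on its own translate $S(y_{k+1})$. The expected number frozen at a fixed $y_{k+1}$ is at most $\lambda\cdot T\cdot(1+\eta)\lambda^k\le \lambda^{k+1}n^{(1+2\eta)c}$, and this bound is direction-free. Returns to $\rho$ then reduce to counting length-$2m$ nearest-neighbor paths on the meta-tree, at most $\binom{2m}{m}N^m$ of them, and the geometric series in $(2N^{1/2}\lambda^{k+1}n^{(1+2\eta)c})^{2m}$ converges exactly when $c<(k-b)/2(1+2\eta)$.

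This reformulation dissolves both obstacles you flag. Obstacle~(ii), correlations between overlapping reactivations, vanishes because independence is imposed by construction: letting coincident frozen particles run independent copies only inflates the count, which is the correct direction for an upper bound. Obstacle~(i), detours through degree-$n$ vertices, is handled because freezing at $L_{k+1}$ charges every hub-crossing as a separate BRW step; the estimate \eqref{nxrho} is only ever invoked inside $T_k$, where all degrees are $\le Cn^{1-\delta}$. Your forward/backward split with per-level reactivation is essentially a recursive unrolling of this same BRW, but as written it risks undercounting the non-monotone meta-paths (out--in--out--\ldots--in) that the factor $\binom{2m}{m}$ captures; since you need an \emph{upper} bound on returns, that gap would have to be closed, whereas the BRW framing makes both the domination and the path enumeration transparent from the start.
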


\begin{proof}
Let $S(\rho)\equiv S_k\cup L_{k+1}$, where we recall that $L_{k+1}=\{ y: d(y,\rho)=k+1\}$. We start the contact process on $S(\rho)$. When a site in $S_k$ gives birth onto a site on $L_{k+1}$  we freeze the particle at $y_{k+1}$
We begin with only $\rho$ occupied and run the process until there are no particles on $S_k$. These particles will be the descendants of the initial particle $\rho$ in a branching random walk that we use to dominate the contact process on the periodic tree. When the contact process on $S_k$ dies out we are left with frozen particles in $L_{k+1}$. Each frozen particle at $y_{k+1}$ starts a new contact process on a subgraph $S(y_{k+1})\equiv \{ z: |z-y_{k+1}|\leq k+1\}$ which is isomorphic to $S(\rho)$ and has center $y_{k+1}$. If there are several frozen particles at the same site they start independent contact processes. Then we freeze every particle that escapes from $S(y_{k+1})$, and so on. 

Let $B(S(\rho),y_{k+1})$ be the total number of particles frozen at $y_{k+1} \in L_{k+1}$ in the contact process on $S(\rho)$. Let $y_1$ denote the neighbor of $\rho$ that is at distance $k$ to $y_{k+1}$. When the center $\rho$ is occupied, it gives birth to a particle at $y_1$ at rate $\lambda$. By the same reasoning as in (\ref{nxrho}), starting from a particle at $y_1$, to produce a particle at $y_{k+1}$ we need a path from $y_1$ to $y_{k+1}$ where in each step a birth occurs before death. If we set the center $\rho$ to be always occupied, then we can ignore the paths from $y_1$ to $y_{k+1}$ that go
through $\rho$. Therefore, starting from a particle at $y_1$ the expected number of particles reaching $y_{k+1}$ is $\leq (1+\eta)\lambda^k$ by the same computation as (\ref{nxrho}).

By Lemma \ref{ubsurv} the expected survival time on $S_k$ is $\leq C(\log n)e^{(1+\eta)\lambda^2 n}$. If during this whole time the center $\rho$ is occupied and pushing particles to $y_1$ at rate $\lambda$, then there are an expected number of 
$$\leq \lambda \cdot C(\log n)e^{(1+\eta)\lambda^2 n}$$
times we start a process from a particle at $y_1$ to produce particles at $y_{k+1}$. Hence 

$$
EB(S(\rho),y_{k+1}) \leq \lambda\cdot C(\log n) e^{(1+\eta)\lambda^2 n} \cdot (1+\eta)\lambda^k \leq \lambda^{k+1} n^{(1+2\eta)c}.
$$
To  bound the number of particles on the periodic tree that reach the root $\rho$, we consider a tree consisting of the vertices of degree $n$ in which each vertex is connected to the others vertices of degree $n$ at distance $k+1$. This is a $N$-regular tree with $N = n(a_1a_2\cdots a_k)$. Starting from the root, there are 
$$
\le {2m \choose m}  \, N^{m} \cdot 1^m \le 2^{2m} N^m
$$ 
paths of length $2m$ that returns to it. So the expected number of particles returning to the root is
\beq\label{sum}
\le 2^{2m} N^m (\lambda^{k+1} n^{(1+2\eta)c})^{2m} = (2N^{1/2} \lambda^{k+1} n^{(1+2\eta)c})^{2m}
\eeq
We have $\log(\lambda)/\log n \to -1/2$ and $(\log N)/\log n \to 1+b$ so
$$
\lim_{n\to\infty} \frac{\log(2 \lambda^{k+1} N^{1/2} n^{(1+\ep)c})}{\log n}
=-\frac{k+1}{2}+\frac{1+b}{2}+(1+2\eta)c<0
$$ 
if $c < (k-b)/2(1+2\eta)$. In this case, the expected number of particles that return to the origin is finite, which means the process does not survive locally. Taking $\ep=2\eta$ completes the proof.
\end{proof}

\section{Lower bound on survival time on stars} 

Here, following the approach of Chatterjee and Durrett \cite{ChaDur}, we will reduce the contact process on a star to a one dimensional chain.
We denote the state of the star by $(j,k)$ where $j$ is the number of occupied leaves and $k=1,0$ when the center is occupied, vacant. 
We will only look at times when the center is occupied. When the center is vacant and there are $j$ occupied leaves,
the next event will occur after exponential time with mean $1/(j\lambda + j)$. The
probability that it will be a birth at the center is $\lambda/(\lambda+1)$. 
The probability it will be the death of a leaf particle is $1/(\lambda+1)$.
Thus, the number of leaf particles $Z$ that will be lost while the center is vacant
has a shifted geometric distribution with success probability $\lambda/(\lambda+1)$, i.e.,
\beq
 P(Z=j) = \left( \frac{1}{\lambda+1}\right)^{j} \cdot \frac{\lambda}{\lambda+1}
 \quad\hbox{for $j\ge 0$}.
\label{Zdist}
\eeq
Note that $EZ=1/\lambda$.
Since we are interested in a lower bound on the survival time, we can simply ignore the time spent when the center is vacant. Here we will construct a process $X_t$ that gives a lower bound on the number of occupied leaves in the contact process.

Let $\delta>0$ and $L=(1-4\delta)\lambda n$. When there are $k\leq L$ occupied leaves and the center is occupied, new leaves become occupied at rate 
$$
\lambda(n-k) \ge \lambda(n -\lambda n) \ge \lambda(1-\delta)n
$$ 
for sufficiently large $n$ since $\lambda=\sqrt{c\log n/n} \to 0$ as $n\to\infty$.

\begin{center}
\begin{picture}(330,60)
\put(0,40){$j=1$}
\put(0,10){$j=0$}
\put(30,10){\line(1,0){270}}
\put(30,40){\line(1,0){270}}
\put(30,10){\line(0,1){30}}
\put(60,10){\line(0,1){30}}
\put(90,10){\line(0,1){30}}
\put(120,10){\line(0,1){30}}
\put(150,10){\line(0,1){30}}
\put(180,10){\line(0,1){30}}
\put(210,10){\line(0,1){30}}
\put(240,10){\line(0,1){30}}
\put(270,10){\line(0,1){30}}
\put(300,10){\line(0,1){30}}
\put(295,45){$L$}
\put(215,45){\vector(1,0){20}}
\put(215,50){$(1-\delta)\lambda n$}
\put(205,45){\vector(-1,0){20}}
\put(190,50){$L$}
\put(215,35){\line(0,-1){30}}
\put(215,5){\line(-1,0){120}}
\put(95,5){\vector(0,1){30}}
\put(85,45){$i-Z$}
\end{picture}
\end{center}

Let $X_t$ have the following transition rates:
\begin{center}
 \begin{tabular}{lc}
 jump & at rate \\
$ X_t \to X_t-1$ & $L$ \\
$ X_t \to \min\{X_t+1, L\}$  & $(1-\delta)\lambda n$ \\
$ X_t \to X_t-Z$ & $1$
\end{tabular}
\end{center} 
Here $Z$ is independent of $X_t$ and has the distribution given in \eqref{Zdist}.

\begin{lemma} \label{super}
Let $\delta>0$. Suppose $\lambda = \sqrt{c(\log n)/n}$ and let 
$$
\theta=\frac{1}{\lambda+1}\left(  \lambda - \frac{1}{\delta\lambda n } \right)
$$ 
If $n$ is large then $h(X_t) \equiv (1-\theta)^{X_t}$ is a supermartingale when $X_t < L$.
\end{lemma}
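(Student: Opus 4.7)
The plan is to verify the supermartingale condition directly by computing the generator $\mathcal{L}$ of $X_t$ applied to $h(x) = (1-\theta)^x$ and showing $\mathcal{L}h(x) \le 0$ for every $x < L$. Writing out the three jump contributions (for $x<L$ the upward jump lands at $x+1$, not at $L$) and factoring out $(1-\theta)^{x}$, the problem reduces to the scalar inequality
\[
L\cdot\frac{\theta}{1-\theta} \;-\; (1-\delta)\lambda n\cdot\theta \;+\; \bigl(E[(1-\theta)^{-Z}] - 1\bigr)\;\le\;0.
\]

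Next I would evaluate the expectation. Since $Z$ is shifted geometric with success probability $\lambda/(\lambda+1)$, summing the geometric series gives
\[
E[(1-\theta)^{-Z}] \;=\; \frac{\lambda/(\lambda+1)}{1-[(1-\theta)(\lambda+1)]^{-1}},
\]
which is finite precisely because the prescribed $\theta$ produces
\[
(1-\theta)(\lambda+1)\;=\;(\lambda+1) - \bigl[\lambda - (\delta\lambda n)^{-1}\bigr]\;=\;1+\tfrac{1}{\delta\lambda n}\;>\;1.
\]
Substituting gives the clean identity $E[(1-\theta)^{-Z}] - 1 = (\delta\lambda^2 n - 1)/(\lambda+1)$.

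Finally, setting $\alpha = 1/(\delta\lambda n)$ so that $\theta = (\lambda-\alpha)/(\lambda+1)$, and splitting $\theta/(1-\theta) = \theta + \theta^2/(1-\theta)$, a short calculation shows that the linear-in-$\theta$ portion cancels almost exactly:
\[
[L - (1-\delta)\lambda n]\,\theta + \bigl(E[(1-\theta)^{-Z}] - 1\bigr)\;=\;\frac{-2(\delta\lambda^2 n - 1)}{\lambda+1},
\]
using $L - (1-\delta)\lambda n = -3\delta\lambda n$ and the identity $\delta\lambda n\cdot\alpha = 1$. The only leftover term is $L\theta^2/(1-\theta) = O(\lambda^3 n) = O(\lambda\log n)$, which is negligible compared to $2\delta\lambda^2 n/(\lambda+1)\sim 2\delta\log n$ since $\lambda\to 0$. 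Hence the bracket is strictly negative for large $n$, giving the supermartingale property. The main obstacle — and evidently the whole reason for the slightly opaque definition of $\theta$ — is producing exactly this cancellation; once one notices that $\theta$ is tuned so that $(1-\theta)(\lambda+1) = 1 + 1/(\delta\lambda n)$, the rest is bookkeeping.
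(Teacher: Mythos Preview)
Your proof is correct and follows essentially the same route as the paper: compute the generator of $h(X_t)=(1-\theta)^{X_t}$, evaluate $E[(1-\theta)^{-Z}]$ via the geometric series, and check that the choice of $\theta$ makes the drift nonpositive. The only cosmetic difference is in handling $\theta/(1-\theta)$: the paper bounds $1/(1-\theta)-1\le(1-\delta)^{-1}\theta$ up front and then uses $L<(1-\delta)(1-3\delta)\lambda n$ to get a net $-2\delta\lambda n\,\theta$ from the first two jumps against $+\delta\lambda n\,\theta$ from the $Z$-jump, whereas you keep the exact splitting $\theta/(1-\theta)=\theta+\theta^2/(1-\theta)$ and discard the $O(\lambda^3 n)$ remainder at the end.
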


\begin{proof}
Suppose the current value is $V = (1-\theta)^{X_t}$ where $X_t \le L=(1-4\delta)\lambda n$. We have
\begin{align*}
V \to V/(1-\theta) &\quad\hbox{at rate $\le L$} \\
V \to V(1-\theta) &\quad\hbox{at rate $\ge (1-\delta) \lambda n$} \\
V \to V(1-\theta)^{-Z} & \quad\hbox{at rate 1}
\end{align*}

The changes in value due to the first two transitions are, if $\theta$ is small,
\begin{align*}
V \left(\frac{1}{1-\theta} - 1\right) \le (1-\delta)^{-1}\theta V &\quad\hbox{at rate $\le L$} \\
V[(1-\theta) -1] = -\theta V &\quad\hbox{at rate $\ge (1-\delta)\lambda n$} 
\end{align*}
We have $L =  (1-4\delta)\lambda n <(1-\delta)(1-3\delta)\lambda n$, so the first two types of jumps have a net drift 
\beq
 \left((1-\delta)^{-1}L - (1-\delta)\lambda n\right) \theta V \le  -(2\delta \lambda n ) \theta V.
\label{part1}
\eeq
In the third case, ignoring the fact that the number of occupied leaves cannot drop below 0, we have
\begin{align*}E(1-\theta)^{-Z} 
& \le \sum_{k=0}^\infty \left(\frac{1}{1+\lambda}\right)^{k} \frac{\lambda}{1+\lambda} \cdot (1-\theta)^{-k} \\
& = \frac{\lambda}{1+\lambda} \sum_{k=0}^\infty  \left(\frac{1}{(1+\lambda)(1-\theta)}\right)^{k} \\
& = \frac{\lambda}{1+\lambda} \cdot \frac{1} { 1 - \frac{1}{(1+\lambda)(1-\theta)}}  = \frac{\lambda(1-\theta)}{\lambda - \theta - \theta\lambda}
\end{align*}
so we have 
$V(E(1-\theta)^{-Z} - 1) = \frac{\theta V}{\lambda - \theta(1+\lambda)} =(\delta\lambda n)\theta V$
for the chosen value of $\theta$. Combining this with \eqref{part1}
gives that for any $\delta>0$, $h(X_t)$ is a supermartingale for large $n$.
\end{proof} 

We use $P_{i}$ to denote the law of the process $X_t$ starting with $X_0=i$. Since $X_t$ omits some time intervals from the contact process
on the star, the next result implies Theorem \ref{lbsurv}.

\begin{lemma}\label{timebd}
Let $L =(1-4\delta)\lambda n$. If $\eta>0$ is small then
$$
P_{L-1} \left( T_{\eta L}^- \ge \frac{1}{\lambda^2 n} e^{(1-4\eta)\lambda^2 n} \right) \to 1 \quad \text { as } n\to\infty.
$$
\end{lemma}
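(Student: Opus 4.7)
The plan is to combine the supermartingale bound of Lemma \ref{super} with a renewal-style counting of returns to the reflecting upper level $L$. The positive drift toward $L$ implicit in the drift calculation of Lemma \ref{super} forces $X_t$ back to $L$ quickly after every excursion, so reaching $\eta L$ requires a very large number of independent failed attempts, each of which succeeds with exponentially small probability; I will quantify these two facts and combine them.

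\textbf{Per-excursion probability.} Starting from $X_0 = L-1$, set $\tau = T_{\eta L}^- \wedge T_L^+$. On $[0,\tau]$ the process stays strictly below $L$, so Lemma \ref{super} applies and $h(X_t) = (1-\theta)^{X_t}$ is a genuine supermartingale throughout. Since $h$ is decreasing and $X_\tau \le \eta L$ on $\{\tau = T_{\eta L}^-\}$, optional stopping yields
$$P_{L-1}\bigl(T_{\eta L}^- < T_L^+\bigr) \;\le\; \frac{h(L-1)}{h(\eta L)} \;=\; (1-\theta)^{(1-\eta)L-1}.$$
With $\theta = \frac{1}{\lambda+1}\bigl(\lambda - \frac{1}{\delta\lambda n}\bigr)$ one checks that $-\log(1-\theta) = \lambda(1-o(1))$, so the right-hand side is at most $q_n := \exp\bigl(-(1-\eta)(1-4\delta)\lambda^2 n(1-o(1))\bigr)$.

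\textbf{Counting excursions and the total time.} After each visit to $L$ the chain jumps to $L-1$ (probability $L/(L+1)$) or to $L-Z$ with $Z \ge 1$ (probability $1/(L+1)$). Reapplying the OST bound from the landing site and averaging over $Z$ using the finiteness of $E(1-\theta)^{-Z}$ established at the end of the proof of Lemma \ref{super}, the probability that the next excursion away from $L$ reaches $\eta L$ before returning to $L$ is at most $C q_n$ for a constant $C$. By the strong Markov property the number of completed excursions $N$ before $T_{\eta L}^-$ stochastically dominates a geometric random variable with parameter $Cq_n$, hence $N \ge 1/(2Cq_n)$ with probability $1-o(1)$. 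Each excursion contains at least one independent $\mathrm{Exp}(L+1)$ holding time at $L$, and the law of large numbers shows the sum of $N$ such holding times is at least $N/(2(L+1))$ with high probability, giving
$$T_{\eta L}^- \;\ge\; \frac{1}{4C(L+1)\,q_n} \;\gtrsim\; \frac{1}{\lambda n}\,\exp\!\bigl((1-\eta)(1-4\delta)\lambda^2 n(1-o(1))\bigr).$$
Choosing $\delta < 3\eta/4$ yields $(1-\eta)(1-4\delta) > 1-4\eta$, and for $n$ large the exponential factor absorbs the prefactor mismatch between $1/(\lambda n)$ and the target $1/(\lambda^2 n)$, producing the stated bound.

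\textbf{Main obstacle.} The principal subtlety is controlling the unbounded $-Z$ jumps in the excursion-counting step. A single such jump at level $L$ can in principle land the process arbitrarily far below $L$, possibly directly inside $[0,\eta L]$, which at first sight seems to wreck any naive single-big-jump argument. The argument closes precisely because of the moment bound $E(1-\theta)^{-Z} < \infty$ proved at the end of Lemma \ref{super}: averaging the OST estimate over the post-jump site costs only a bounded multiplicative constant rather than an exponential one, so the per-excursion failure probability retains exactly the desired exponential decay.
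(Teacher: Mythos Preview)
Your proof is correct and follows essentially the same route as the paper: bound the per-excursion escape probability via optional stopping with the supermartingale $h(X_t)=(1-\theta)^{X_t}$ from Lemma \ref{super}, deduce that the number of returns to $L$ is geometrically large, and multiply by a per-return holding time of order $1/(\lambda n)$. Your explicit treatment of the $-Z$ jump away from $L$ is in fact a refinement the paper glosses over; note, though, that $E(1-\theta)^{-Z}=1+\delta\lambda^2 n\theta\asymp \log n$ rather than $O(1)$, so the constant is bounded only because this term is weighted by $1/(L+1)=o(1/\log n)$---which is exactly what your argument needs.
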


\begin{proof}
Suppose $a < x < b$ are integers. Let $T_a^- = \inf\{ t : X_t < a\}$, let $T_b = \inf\{ t : X_t=b\}$ and note that
$X(T_b)=b$ while $X(T^-_a) \le a-1$. Since $h(X_t)$ is a supermartingale
and $h$ is decreasing
$$
h(x) \ge h(a-1) P_x(T^-_a < T_b ) + h(b) [1 -  P_x(T^-_a < T_b ) ]
$$
Rearranging we have
$$
P_x(T^-_a < T_b ) \le \frac{h(x) -h(b)}{h(a-1)-h(b)}.
$$
When $x=b-1$ this implies
\begin{align*}
P_x(T^-_a < T_b ) & \le \frac{h(b-1) -(1-\theta) h(b-1)}{h(a-1)-h(b-1)}\\
& = \frac{\theta h(b-1)/h(a-1)}{1 - h(b-1)/h(a-1)} 
\end{align*} 

Let $\eta>0$. We will apply this result with $b = (1-4\delta)\lambda n$ and $a=\eta b$. 
If $\delta$ is small $b \ge (1-\eta)\lambda n$. If $\lambda$ is small then
$1-\theta < 1-(1-\eta)\lambda$.  With these choices 
$$
h(b-1)/h(a-1) = (1-\theta)^{b-a} < (1-(1-\eta)\lambda)^{(1-2\eta)\lambda n}\leq \exp\left(-(1-3\eta)\lambda^2n\right).
$$
If $n$ is large,
\beq
P_{b-1}( T^-_a < T_b ) \le  2 \lambda \exp(-(1-3\eta)\lambda^2 n).
\label{abub}
\eeq
Let  $G_L = \{ X_t\hbox{ returns }(1/2\lambda)e^{(1-4\eta)\lambda^2 n}\hbox{ times to $L$ before going } <\eta L \}.$
It follows from \eqref{abub} that
$P(G_L) \ge 1 - e^{-\eta\lambda^2 n}$.
In order to return to $L$ we have to jump from $L-1$ to $L$, a time that dominates an exponential random variable with parameter $\lambda n/2$ so the law of large numbers tells us that the total amount of time before $X_t < \eta L$ is 
$\ge \frac{1}{\lambda^2 n} e^{(1-4\eta)\lambda^2 n}$
on $G_L$ which completes the proof. 
\end{proof}

\section{Ignition on a star graph} 
In this section we will describe a mechanism for the contact process on the period tree $(n,a_1,\dots, a_k)$ to survive, which basically relies on the dynamics on the star graphs of degree $n$ embedded in tree. Vertices with degree $n$ will be called \textit{hubs}. 

There are three ingredients in the proof of the upper bounds on $\lambda_2$ 
\begin{enumerate}
  \item survival of the process on the star graph containing a hub for a long time, 
  \item pushing particles from one hub to other hubs at distance $k+1$, 
  \item  ``ignition'', which refers to increasing the number of occupied leaves at the new hub  to $L$.
\end{enumerate}  

\noindent
The first point was taken care of in the previous section. The third is covered in this one.
Starting from only the central vertex of a degree $n$ hub occupied, we need to increase the number of occupied leaves to $L=(1-4\delta)\lambda n$ by time $n^c/4$, which is referred to as the \textit{ignition} of a hub. We treat $L$ and $K$ in the following lemma as integers for simplicity.

\begin{lemma} \label{ignite}
Suppose $\lambda = \sqrt{c_0 (\log n)/n }$.
Let $T_{0,0}$ be the first time the star is vacant and $T_i$ be the first time the star has $i$ occupied leaves. For any small $\delta>0$ if $K = \lambda n/\sqrt{\log n}$
and $L=(1-4\delta)\lambda n$, then for large $n$
\begin{align*}
& (i) P_{0,1}( T_K > T_{0,0} )  \le  3/\sqrt{\log n},  \\
& (ii) P_{K,1} (T_{0,0} < T_L)  \le  2\exp(- (c_0/3) \sqrt{\log n}) \\
& (iii) E_{0,1} \min\{ T_{0,0},  T_L \} \le (1 + \log n)/2\delta
\end{align*}
\end{lemma}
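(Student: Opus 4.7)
My plan is to analyze the contact process on the star as a Markov chain with state $(j,k)$, where $j$ counts occupied leaves and $k\in\{0,1\}$ indicates whether the center is occupied. All three parts exploit the fact that for $j\le L$ and $k=1$, the leaf birth rate $\lambda(n-j)\ge (1-\delta)\lambda n$ dominates the combined death rate $j+1$, so the process can only die through a center-death event followed by loss of all remaining leaves, an event which at state $(j,1)$ succeeds with conditional probability $(1+\lambda)^{-j}$.

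For part (i), I would estimate the expected number of center-death attempts encountered on the way from $(0,1)$ to leaf-count $K$, and multiply by a bound on the extinction probability per attempt. The strong upward drift implies the chain reaches $K$ in expected time of order $K/(\lambda n)=1/\sqrt{\log n}$, so by Markov the expected number of center-death attempts in that interval is of the same order. Combining this with the monotonicity (in $j$) of the hitting probabilities $h_j=P_{(j,1)}(T_K<T_{0,0})$ and the per-attempt bound $(1+\lambda)^{-j}\le 1$, either via a direct first-step analysis or a comparison with a simpler three-state absorbing birth-death chain on $\{0,1,\ldots,K,D\}$ with death state $D$, should yield the claimed bound $3/\sqrt{\log n}$.

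For part (ii), I would apply the supermartingale $h(X_t)=(1-\theta)^{X_t}$ of Lemma \ref{super} to the lower-bound process $X_t$ introduced in Section 4. Because $X_t$ is dominated by the actual leaf count, the true contact process reaches $L$ leaves before extinction whenever $X$ hits $L$ before falling to $0$. Optional stopping at $\tau=T_0^-\wedge T_L$, exactly as in the proof of Lemma \ref{timebd}, gives $P_{K,1}(T_{0,0}<T_L)\le (h(K)-h(L))/(1-h(L))\le 2(1-\theta)^K$. Since $\theta$ is comparable to $\lambda$ and $\lambda K=c_0\sqrt{\log n}$, the right-hand side is at most $2\exp(-(c_0/3)\sqrt{\log n})$ for large $n$, where the constant $c_0/3<c_0$ absorbs lower-order corrections.

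For part (iii), I would use a harmonic-sum Lyapunov function $\phi(j)=H_j=\sum_{i=1}^{j}1/i$. A direct computation shows that at state $(j,1)$ with $j\le L$ its drift is $\lambda(n-j)/(j+1)-1\ge 4\delta/(1-4\delta)\ge 4\delta$, while $\phi(L)\le 1+\log L\le 1+\log n$. Modulo careful bookkeeping of the negative drift incurred at center-vacant states $(j,0)$, where $\phi$ decreases at rate $1$ but the time spent per excursion is short so the negative contributions are dominated by the positive drift during the much longer center-occupied epochs, Dynkin's formula yields $E_{0,1}[\min(T_{0,0},T_L)]\le \phi(L)/(2\delta)\le (1+\log n)/(2\delta)$. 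I expect this drift bookkeeping to be the most delicate step of the argument.
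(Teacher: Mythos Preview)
Your part (ii) matches the paper exactly: both apply the supermartingale $h(X_t)=(1-\theta)^{X_t}$ of Lemma~\ref{super} and optional stopping, obtaining $P_{K,1}(T_{0,0}<T_L)\le 2(1-\lambda/3)^{K}\le 2\exp(-(c_0/3)\sqrt{\log n})$.

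For part (i) the paper takes a different route. Rather than counting center-death attempts, it fixes the deterministic time $t=2/\sqrt{\log n}$, notes that the center survives throughout $[0,t]$ with probability $e^{-t}\ge 1-2/\sqrt{\log n}$, and observes that on this event each leaf is independently occupied at time $t$ with probability $p_0(t)\ge\lambda t/2$, so the leaf count dominates a Binomial$(n,\lambda/\sqrt{\log n})$ variable whose lower tail is handled by Chebyshev. Your drift-based argument can be made rigorous by coupling to the center-always-occupied leaf process $Y_t$, bounding its hitting time $\tau$ of $K$ by $E\tau\le 2K/(\lambda n)=2/\sqrt{\log n}$ via a linear submartingale, and noting that $\{T_K>T_{0,0}\}\subset\{\text{center dies before }\tau\}$, an event of probability at most $E\tau$. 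As you wrote it, however, there is a mild circularity: the claim ``the chain reaches $K$ in expected time $K/(\lambda n)$'' is asserted for the full process including center deaths, which is precisely the quantity in question.

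For part (iii) there is a genuine gap. At states $(j,0)$ the generator applied to $\phi(j)=H_j$ gives drift $-j\cdot(1/j)=-1$, so $\phi(j_t)-2\delta t$ is \emph{not} a submartingale on the two-component chain, and Dynkin's formula does not yield the bound. The ``bookkeeping'' you flag is not merely delicate; it forces a reorganization of the argument. The paper sidesteps this by working instead with the collapsed process $X_t$ of Section~4, in which each center-vacant excursion is replaced by an instantaneous jump $X_t\to X_t-Z$, and by using the \emph{linear} function: since $EZ=1/\lambda$, the drift of $X_t$ is $\mu=(1-\delta)\lambda n-L-1/\lambda\ge 2\delta\lambda n$, so $X_t-\mu t$ is a submartingale and optional stopping gives $EV_L\le L/\mu\le 1/(2\delta)$. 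The factor $(1+\log n)$ enters only at the end, as a crude upper bound on the expected duration of a single center-vacant excursion (at most the harmonic sum $H_n/(1+\lambda)$), multiplied by the expected number $EV_L$ of such excursions. Separating the two time scales in this way is what makes the argument clean; pushing your harmonic Lyapunov through the uncollapsed chain would require additional case-splitting (e.g.\ handling $j<K$ and $j\ge K$ separately) to repair.
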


\begin{proof}  
Let $p_0(t)$ be the probability a leaf is occupied at time $t$ when there are no occupied leaves at time 0 and
the central vertex has been occupied for all $s \le t$. $p_0(0)=0$ and
$$
\frac{dp_0(t)}{dt} = - p_0(t) + \lambda(1-p_0(t)) = \lambda -(\lambda+1) p_0(t) 
$$
Solving gives 
$p_0(t) = \lambda ( 1 - e^{-(\lambda +1)t} )/(\lambda+1)$.
As $t \to 0$, $p_0(t) \sim \lambda t$
so if $t$ is small $p_0(t) \ge \lambda t/2$

Taking $t = 2/\sqrt{\log n}$ it follows that if $B =  \hbox{Binomial}(n,\lambda/\sqrt{\log n})$
$$
P_{0,1}( T_K < T_{0,0} ) \ge P( B > K) \exp(-2/\sqrt{\log n})
$$
The second factor is the probability that the center stays occupied until time $2/\sqrt{\log n}$, and 
$\exp(-2/\sqrt{\log n})\ge 1 - 2/\sqrt{\log n}. $
$B$ has mean $\lambda n/\sqrt{\log n}$ and variance $\le \lambda n/\sqrt{\log n}$ so Chebyshev's inequality implies
$$
P( B < \lambda n/(2\sqrt{\log n}) ) \le \frac{\lambda n/\sqrt{\log n}}{(\lambda n/(2\sqrt{\log n}))^2} \le \frac{4 \sqrt{\log n}}{\lambda n} \le \frac{1}{\sqrt{\log n}}.
$$
For (ii) we use the supermartingale $h(X_t)$ from Lemma \ref{super}. 
\begin{align*}
P_{K,1}(T_{0,0} < T_L) & \le 2(1-\lambda/3)^{\lambda n/\sqrt{\log n}}\\
& \le 2\exp(- \lambda^2 n/3\sqrt{\log n} ) =2 \exp( - (c_0/3) \sqrt{\log n} ).
\end{align*}

For (iii) we compare with the process $X_t$ in which we ignore the time spent when the center is vacant. To bound the time for the process $X_t$ to reach $L$ or die out we note that $EZ = (\lambda+1)/\lambda - 1 = 1/\lambda$ so when $n$ is large
$$
\mu =(1-\delta) \lambda n- (1-4\delta)\lambda n- 1/\lambda=3\delta \lambda n-1/\lambda \geq 2\delta \lambda n
$$
gives a lower bound on the drift. Let $\hat{T}_{0,0}$ be the first time $X_t$ hits 0 and $\hat{T}_L$ be the first time $X_t$ hits $L$.
$X_t - \mu t$ is a submartingale before time $V_L = \hat{T}_{0,} \wedge \hat{T}_L$.  Stopping the submartingale $X_t-\mu t$ at the bounded stopping time $V_L \wedge s$ 
$$
EX(V_L\wedge s) - \mu E(V_L\wedge s) \ge EX_0=0.
$$
Since $EX(V_L\wedge s) \le L$, it follows that
$E(V_L\wedge s)\le L/\mu$.
 
Letting $s\to\infty$ we have $EV_L\le L/\mu \le 1/2\delta $ since $L=(1-4\delta)\lambda n$ and $\mu\geq 2\delta \lambda n$.
Note that the above calculation is for $X_t$ which ignores the time when the center is vacant. To bound the time when the center is vacant, we note that
the most extreme excursion that starts at $n$ and goes to 0 takes a time with mean $(\log n)/(1+\lambda)$. During time $[0, V_L]$ the excursions occur at rate 1, that is,
$E_{0,1} \min\{ T_{0,0},  T_L \} \le (1+\log n)E V_L \leq (1+\log n)/2\delta.$
\end{proof}

\section{Upper bound on $\lambda_2$}

We first prove the result for the $(1,n)$ tree. Suppose that $\lambda = \sqrt{c (\log n)/n}$ with $c > 1/2$.  We select one hub to call the root. In the first phase of the construction we only allow the particles to be passed to other hubs that are children of the current vertex and are at distance 2.

\mn
{\bf Step 1. Pushing the particles out to distance 2m.} 

\mn
Lemma \ref{timebd} implies that if there at least $L$ occupied leaves before time $n^c/4$ then with high probability we have 
$$
G_0 = \{ \hbox{there will be at least $\eta L$ occupied leaves during $[n^c/4, 3n^c/4]$} \}. 
$$
During this time interval the hub will try to push particles to hubs at distance $2$. The first step is for the leaves to give birth onto the center. 

\begin{lemma}
Suppose that the number of occupied leaves is always $\ge \eta L$ on $[0,n^c/2]$.
Let $t_0=2/(1-4\delta)\eta$ and $G_1 = \{$there is no interval of length $\ge t_0$  
during which the center is always vacant$\}$. As $n\to\infty$, $P(G_1) \to 1$.
\end{lemma}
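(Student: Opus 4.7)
The plan is to exploit the Poisson structure of birth attempts from leaves to the center. By hypothesis, at every time $t\in[0,n^c/2]$ there are at least $\eta L=\eta(1-4\delta)\lambda n$ occupied leaves. In the graphical representation, each directed leaf-to-center edge carries an independent rate-$\lambda$ Poisson clock, and any firing of such a clock from an occupied leaf while the center is vacant instantaneously reoccupies the center. So for the center to remain vacant throughout a subinterval $J\subset[0,n^c/2]$, no leaf-to-center clock may fire from an occupied leaf during $J$.

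First I would observe that, conditional on the trajectory of leaf states, the times of such firings form an inhomogeneous Poisson process of rate
\[
\lambda\cdot|\{\text{occupied leaves at time }t\}|\;\ge\;\lambda\eta L\;=\;\eta(1-4\delta)\lambda^2 n\;=\;\eta(1-4\delta)c\log n,
\]
where the last equality uses $\lambda^2 n=c\log n$. Call this lower bound $\mu$. The probability that no firing occurs in a specific subinterval of length $t_0$ is then at most $e^{-\mu t_0}$; substituting $t_0=2/[(1-4\delta)\eta]$ gives $\mu t_0=2c\log n$, hence the bound $n^{-2c}$.

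Next I would upgrade this pointwise bound to a uniform statement via a standard covering argument. Cover $[0,n^c/2]$ by the overlapping intervals $I_j=[jt_0/2,\,(j+2)t_0/2]$, $j=0,1,\ldots,\lceil n^c/t_0\rceil$. Any subinterval of $[0,n^c/2]$ of length at least $t_0$ contains some $I_j$, so on $G_1^c$ some $I_j$ must contain no leaf-to-center firing from an occupied leaf. A union bound then gives
\[
P(G_1^c)\;\le\;\frac{2n^c}{t_0}\cdot n^{-2c}\;=\;O(n^{-c}),
\]
which tends to $0$ as $n\to\infty$.

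The only real subtlety is the bookkeeping around conditioning: the leaf-to-center Poisson clocks are mutually independent, but the leaves' occupation histories depend on other parts of the graphical representation, so one has to argue that after conditioning on any admissible leaf-history the firings from currently-occupied leaves still dominate an inhomogeneous Poisson process of rate $\mu$. This is a routine consequence of the independence built into the graphical representation, and once it is in hand the result follows from the pointwise exponential tail and the union bound above; no extra martingale or coupling machinery is required.
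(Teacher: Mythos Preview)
Your argument is essentially correct, but the covering step has a small slip: with $I_j=[jt_0/2,(j+2)t_0/2]$ of length $t_0$ and step $t_0/2$, an arbitrary subinterval of length exactly $t_0$ need not contain any $I_j$ (it only must if its length is $\ge 3t_0/2$). This is easily repaired---for instance use covering intervals of length $2t_0/3$ with step $t_0/3$, which gives per-interval probability $\le n^{-4c/3}$ and hence $P(G_1^c)=O(n^{-c/3})\to 0$. Your handling of the conditioning on leaf histories is appropriate and more explicit than what the paper writes.

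The paper uses a different decomposition. Rather than covering the time axis by fixed intervals, it observes that the center becomes vacant at the times of a rate-$1$ Poisson process, bounds the number of such vacancy times in $[0,n^c/2]$ by $n^c$ via a Poisson large-deviation estimate (failure probability $\le e^{-\gamma n^c}$), and then applies the same tail bound $P(R>t_0)\le e^{-t_0\lambda\eta L}=n^{-2c}$ to each of the at most $n^c$ vacancy periods, obtaining $P(G_1^c)\le e^{-\gamma n^c}+n^c\cdot n^{-2c}$. Your fixed-grid approach is slightly more elementary in that it avoids the large-deviation step and any conditioning on random stopping times; the paper's decomposition is arguably more natural since vacancy periods are precisely the objects in the definition of $G_1$. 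Both routes land on $P(G_1^c)=O(n^{-c})$ up to harmless constants.
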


\begin{proof}
The center becomes vacant at times of a Poisson process with rate 1. Using large deviations results for the Poisson process, the probability there are more than $n^c$ arrivals in an interval of length $n^c/2$ is $\le \exp(-\gamma n^c)$. 
Suppose the center is vacant and  let $R$ be the time needed until it becomes occupied. 
$$
P( R > t_0 ) \le \exp(-t_0\lambda \eta L) = \exp(-2c\log n) = n^{-2c}
$$ 
Hence $P(G^c_1) \le \exp(-\gamma n^c) + n^c n^{-2c} \to 0$.
\end{proof}

When the center is occupied  there is probability $e^{-1}(1-e^{-\lambda})$ that it will stay occupied for time 1 and give birth onto a given leaf within time 1. With probability $e^{-1}$ that leaf will stay occupied until time 1. Doing this twice the probability of passing a particle to a given hub at distance 2 is
$$
\ge \left[ e^{-1} (1-e^{-\lambda}) e^{-1} \right]^{2} \ge \lambda^{2}/4e^{4} \ge \frac{C_1 \log n}{n} 
\qquad\hbox{where $C_1=c/4e^{4}$.}
$$
Since our cycle takes time $t_1\equiv t_0+2$, we have $n^c/t_1$ chances to do this during $[n^c/4, 3n^c/4]$. The probability that all attempts fail is
\beq
\le \left( 1- \frac{C_1 \log n}{n} \right)^{n^c/t_1} \le  \exp\left( - C_2 n^{c-1} \log n \right) \quad\text{ where }C_2=\frac{C_1}{t_1}.
\label{push}
\eeq

\medskip
It follows from \eqref{push} that  the probability of a successful push in $[n^c/4, 3n^c/4]$ is 
\beq\label{pushc}
\geq 1-\exp\left( - C_2 n^{c-1} \log n \right)\geq n^{c-1} \quad\hbox{when $n$ is large. }
\eeq
We say that a hub at distance $2m$ that is a descendant of the root is \textit{wet}  if it has $\ge \eta L$ occupied leaves at time $mn^c$. Starting with a wet hub at time 0, the center of the hub will become occupied at rate at least $\eta L$ and then Lemma \ref{ignite} implies that the hub will be ignited within time $n^c/4$ with high probability. When the hub successfully pushes a particle to an adjacent hub during time $[n^c/4, 3n^c/4]$, that adjacent hub can ignite within the next $n^c/4$ units of time with high probability and hence be wet at time $n^c$. Therefore a wet hub can make an adjacent hub wet with probability $\geq n^{c-1}$ when $n$ is large.
If we condition on $G_0 \cap G_1$ then the pushing events for different neighbors are independent,  the number of neighboring hubs that become wet is $\ge \hbox{binomial}(n,n^{c-1})$. Let $Z_m$ be the number of wet hubs at distance $2m$ at time $mn^c$, then $EZ_m = (n^{c})^m$. 

\mn
{\bf Step 2. Bringing a particle back to the root.} 

\medskip
Each star at distance $2m$ from the root has a unique path back to the root.  By \eqref{pushc}
the probability for pushing a particle at each step is $\ge n^{c-1}$ so the mean number of paths $N_m$ that go out a distance 
$2m$ and lead back to the origin is
$$
EN_m \ge n^{(2c-1)m}.
$$ 
The different paths back to the root are not independent.
The number of paths from distance $2m$ back to the root that agree in the last $k$ steps is 
$\sim n^k (n^{m-k})^2.$
The probability that all the edges in the combined path are successful pushes is 
$$
n^{2(c-1)[k + 2(m-k)]}.
$$
Here and in the next step we are assuming that the probability of a successful push is exactly $n^{c-1}$
The expected number of successful pairs of paths outward to distance $2m$ that agree in the first $k$ steps is
$\sim  n^{(2c-1)[k + 2(m-k)]}.$
Thus the second moment of the number of successful paths out and back is
$$
EN^2_m  \le \sum_{k=0}^m n^{(2c-1)[k + 2(m-k)]} 
\le n^{(2c-1)2m} \left( 1 + \sum_{\ell=1}^m n^{-(2c-1) \ell} \right).
$$
This implies that $(EN_m)^2/E(N_m^2) \to 1$. The Cauchy-Schwarz inequality implies that
$$
E\left( N_m 1_{\{N_m>0\}} \right)^2 \le E(N_m^2) P(N_m>0).
$$
Rearranging we conclude that 
$P(N_m>0) \ge (E N_m)^2 /E(N_m^2) \to 1.$ 
Since $m$ is arbitrary we have 
that a particle returns to the root at arbitrarily large times so we have established our upper bound on $\lambda_2$ for the $(1,n)$ tree.

\mn
{\bf Step 3: Proof for $(n,a_1, \ldots, a_k)$ tree}

\medskip
The structure of the proof is almost the same as the period 2 case given above, so we content ourselves
to compute the constant. The probability of pushing a particle to a given hub at distance $k+1$ is 
$$\geq [e^{-1}(1-e^{-\lambda})e^{-1}]^{k+1}\geq C_1\lambda^{k+1} \quad \text{where }C_1=(\frac{1}{2e^2})^{k+1}.$$
Suppose $\lambda=\sqrt{(c\log n)/n}$. It follows from similar computations to (\ref{pushc}) that the probability of a successful push between time $[n^c/4,3n^c/4]$ is $\geq n^{c-(k+1)/2}$. Since there are $N=n(a_1a_2\cdots a_k)$ hubs at distance $k+1$ from the root, the expected number of particles we will generate per step as we work our way away from the root is $N n^{c-(k+1)/2}$ and as we work our way back in is $n^{c-(k+1)/2}$.
To guarantee a lineage returning to the root, i.e., $N_m>0$ for arbitrarily large $m\in \mathbb{N}$, we need 
$$\lim_{n\to\infty} \frac{ \log(N n^{c-(k+1)/2} \cdot n^{c-(k+1)/2})}{\log n} >0,$$
i.e.,$1+b+2c-(k+1)>0.$
So we want 
$c> (k+1)/2-(b+1)/{2}=(k-b)/2,$
the constant given in the theorem.







\end{document}